\pgfplotsset{compat=1.16}
\newtheorem{theorem}{Theorem}
\newtheorem{lemma}[theorem]{Lemma}
\newtheorem{corollary}[theorem]{Corollary}
\newcommand{\f}{\frac}
\renewcommand{\P}{\mathbf P }
\newcommand{\0}{\mathbf 0}
\newcommand{\E}{\mathbf{E}}
\newcommand{\T}{\mathbb{T}}
\newcommand{\ind}[1]{\mathbf{1}{\{ #1 \}}}
\newcommand{\HOX}[1]{\marginpar{\footnotesize #1}}
\newcommand{\RR}{\mathfrak{R}}
\newcommand{\R}{\mathcal R}
\newcommand{\B}{\mathcal B}
\newcommand{\vlam}{{\vec\lambda}}
\newcommand{\vrho}{{\vec\rho}}
\title{Distance-dependent chase-escape on trees}
\author[Hernandez-Torres]{Sarai Hernandez-Torres}
\email{saraiht@im.unam.mx}
\author[Junge]{Matthew Junge}
\email{Matthew.Junge@baruch.cuny.edu}
\author[Ray]{Naina Ray}
\author[Ray]{Nidhi Ray}
\thanks{Hernandez-Torres was supported by ISF grant 1692/17. Junge was partially supported by NSF Grant \#2115936.}
\begin{document}

\maketitle

\begin{abstract}
    We give a necessary and sufficient condition for species coexistence in a parasite-host growth process on infinite $d$-ary trees. The novelty of this work is that the spreading and death rates for hosts depend on the distance to the nearest parasite. 
\end{abstract}

\section{Introduction}

Chase-escape is an interacting particle system inspired by certain parasite-host dynamics \cite{rand1995invasion,keeling1995ecology, metz2000geometry}. The same dynamics have been reinterpreted in a variety of applications: predator-prey systems, rumor scotching, infection spread, and malware repair in a device network \cite{bordenave2014extinction, rumor, de2015process, gilbert, bernstein2022chase}. Red (host) particles occupy and spread to adjacent sites of a graph according to exponential clocks while facing the threat of blue (parasite) particles. When a host is infected by a parasite, the host perishes. The site where this occurs is occupied by the parasite for all time thereafter. 

The authors of \cite{ced} introduced a variant called \emph{chase-escape with death} in which red particles die at a given rate, independently of the spread of parasites. They studied this process on infinite $d$-ary trees and characterized the phase behavior. Much of the analysis in \cite{ced} relied on a novel connection to weighted Catalan numbers. We deepen this connection by generalizing to the setting in which the host spreading and death rates depend on the distance to the nearest parasite.

We begin by defining \emph{generalized chase-escape} on a $d$-ary tree. Fix $d \geq 1$ and let $\T_d$ denote the infinite, rooted $d$-ary tree in which every vertex has $d$ children. Denote the root by $\0$ and let $\mathcal T_d$ be $\T_d$ augmented with an additional vertex $\mathfrak b$ attached to $\0$. Vertices of $\mathcal T_d$ are in one of four states $\{w,b,r,\dagger\}$. State $w$ is a ``white'' unoccupied site, state $b$ is a ``blue'' site occupied by a parasite, state $r$ is a ``red'' site occupied by a host, and state $\dagger$ is a site containing a ``dead'' host.  Given vertices $u,v \in \mathcal T_d$, define $|u-v|$ to be the number of edges on the unique shortest length path connecting $u$ and $v$. We write $\sigma(u)$ to be set of vertices on the shortest path connecting $\mathfrak b$ to $u$. 

The dynamics of \emph{distance-dependent chase-escape} are as follows. Vertex $\mathfrak b$ is initially in state $b$. The root is initially in state $r$. All other vertices begin in state $w$. Adjacent sites in states $(b,r)$ transition to $(b,b)$ according to a rate 1 Poisson process i.e., the time for each event is an exponential random variable with mean 1. To specify the red spreading and death rates we take vectors
\begin{align} 
\vlam &= (\lambda_1,\lambda_2,\hdots) \text { and }
\vrho = (\rho_1, \rho_2 ,\hdots )
\end{align}
of nonnegative real numbers.
Given a vertex $u \in \mathcal T$, define the distance to the nearest vertex in state $b$ as
\begin{align}
\ell(u) &= \min \{|u-v| \colon  v \in \sigma(u)  \text{ and $v$ is in state $b$} \}.
\end{align}
Note that $\ell$ depends on the current configuration of the tree.
Adjacent vertices $(u,v)$ with $u$ in state $r$ and $v$ in state $w$ have $v$ transition to state $r$ according to a Poisson process with intensity $\lambda_{\ell(u)}$ i.e., after exponentially distributed times with mean $1/\lambda_{\ell(u)}$. Meanwhile, a vertex $u$ in state $r$ transitions to state $\dagger$ according to a Poisson process with intensity $\rho_{\ell(u)}$. Unless the dependence is important to highlight, we will typically write $\P(A)$ rather than $\P_{\vlam, \vrho}(A)$ for events $A$ pertaining to distance-dependent chase-escape.

In words, blue may only spread to red sites and does so at rate 1. Red may only spread to white sites and does so at a rate that depends on the distance to the nearest blue site. Similarly, the rate that a red particle dies depends its distance to the nearest blue particle.  The chase-escape with death process from \cite{ced} is the special case of our model with $\lambda_i \equiv \lambda$ and $\rho_i \equiv \rho$ for all $i \geq 1$ with $\lambda, \rho > 0$.



 We are interested in the persistence of both species. Let $\B$ equal the sites that are at some time colored blue. Since blue may only occupy sites that were at some time red, we say that \emph{coexistence} occurs if $\P(|\B|=\infty) >0$. We say that \emph{expected coexistence} occurs if the weaker condition $\E[|\B|] = \infty$ occurs. 
  The work \cite{ced} provided a necessary and sufficient condition for coexistence to occur and described the behavior at the phase transition in detail. For the sake of a simpler, less technical generalization of a main idea from \cite{ced}, we focus on expected coexistence (rather than coexistence). 
  
  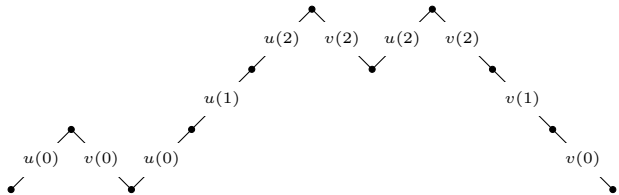
\begin{figure}
	\begin{tikzpicture}[scale = .8]
			\draw (0,0) -- (1,1) node [midway,fill=white] {\tiny $u(0)$} -- (2,0)  node [midway,fill=white] {\tiny $v(0)$} -- (3,1) node [midway,fill=white] {\tiny $u(0)$} -- (4,2) node [midway,fill=white] {\tiny $u(1)$}-- (5,3) node [midway,fill=white] {\tiny $u(2)$} -- (6,2) node [midway,fill=white] {\tiny $v(2)$} --  (7,3) node [midway,fill=white] {\tiny $u(2)$} -- (8,2) node [midway,fill=white] {\tiny $v(2)$} -- (9,1) node [midway,fill=white] {\tiny $v(1)$} -- (10,0) node [midway,fill=white] {\tiny $v(0)$};
			
	\draw[fill] (0,0)  circle [radius=0.05]
            (1,1)  circle [radius=0.05]
            (2,0) circle [radius=0.05]
            (3,1) circle [radius=0.05]
            (4,2) circle [radius=0.05]
            (5,3) circle [radius=0.05]
            (6,2) circle [radius=0.05]
            (7,3) circle [radius=0.05]
            (8,2) circle [radius=0.05]
            (9,1) circle [radius=0.05]
            (10,0) circle [radius=0.05]
            ;

	\end{tikzpicture}

\caption{A Dyck path of length $10$ with weight $u(0)^2 v(0)^2 u(1)v(1) u(2)^2 v(2)^2.$}\label{fig:weighted}
\end{figure}

  As observed in \cite{ced}, the phase structure of chase-escape with death is connected to weighted Catalan numbers. We generalize this connection. Let $D_j = \sum_{i=1}^j \rho_i$ and define the weights
  \begin{align}
u(j) =  \f{\lambda_{j+1}}{1+\lambda_{j+1} + D_{j+1} } \text{ and } v(j) = \f{1}{1+\lambda_{j+2} +D_{j+2}} \label{eq:uv}.
\end{align}
 Given a lattice path $\gamma$ consisting of unit rise and fall steps, each rise step from $(x,j)$ to $(x+1,j+1)$ has weight $u(j)$, while a fall step from $(x,j+1)$ to $(x+1,j)$ has weight $v(j)$. The \emph{weight} $\omega(\gamma)$ of a Dyck path $\gamma$ (nonnegative lattice path starting at $(0,0)$ consisting of $k$ rise and $k$ fall steps) is the product of the rise and fall step weights along $\gamma$. See Figure~\ref{fig:weighted}. The corresponding \emph{weighted Catalan number} is 
\begin{align}C_k^{\vlam ,\vrho } = \sum_{\gamma \in \Gamma_k} \omega(\gamma)\label{eq:Ck}
\end{align}
where $\Gamma_k$ is the set of all Dyck paths of length $2k$. Denote the generating function by
\begin{align} 
g(z) &= \sum_{k \geq 0} C_k^{\vlam ,\vrho } z^k. \label{eq:g}
\end{align}
Let $M$ be the largest value such that $|g(z)|< \infty$ for all complex numbers $|z| < M$. 

The weights at \eqref{eq:uv} make it so the $C_k^{\vlam ,\vrho }$ correspond to the probability that a renewal, where blue is once again adjacent to the rightmost red site, occurs in a version of distance-dependent chase-escape on the nonnegative integers (see \eqref{eq:Rcat}). In the setting from \cite{ced}, it is proven that the radius of convergence of $g$ relative to the degree of the tree determines the phase. This continues to hold in distance-dependent chase-escape.

\begin{theorem} \thlabel{thm:main}
   Suppose that there exist constants $c>0$ and $m\geq 1$ such that
   \begin{align} \label{eq:hyp}
    \prod_{i = 3}^{\ell -2} \left( 1 + \frac{\lambda_i}{1 + D_i} \right) \leq c\ell^m \quad \text{ for all $\ell \geq 5$},
    \end{align}
    and that for $u$ and $v$ as at \eqref{eq:uv} 
    \begin{align}
	    \lim_{j \to \infty} u(j) v(j) = 0. \label{eq:hyp2}
    \end{align}
Then for $d \geq 2$, expected coexistence on $\mathcal T_d$ occurs if and only if $M \leq d$ with $M$ the radius of convergence of the generating function defined at \eqref{eq:g}. 
\end{theorem}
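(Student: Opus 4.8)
The plan is to reduce expected coexistence on $\mathcal T_d$ to a first-moment computation involving the weighted Catalan numbers $C_k^{\vlam,\vrho}$, following the architecture of \cite{ced} but carefully tracking the distance-dependence. I would first analyze the process restricted to a single ray (the ``$\mathbb Z_{\geq 0}$ version'' alluded to after \eqref{eq:g}): along a fixed infinite path, track the position of the rightmost red site relative to the unique blue site behind it. A renewal occurs when, after some excursion, blue is once again adjacent to the rightmost red vertex. The weights \eqref{eq:uv} are engineered precisely so that the probability of such a renewal excursion of ``length'' $2k$ equals $C_k^{\vlam,\vrho}$ — this is the combinatorial heart and I would prove it by setting up the excursion as a Dyck path whose rise steps correspond to red advancing and fall steps to blue advancing, weighting each step by the competing-exponential probability that the relevant clock rings next (the denominators $1+\lambda+D$ are the total rates; the numerator $\lambda_{j+1}$ gives a red advance, and the death contributions $D$ are absorbed into the normalization so that $u(j)+ (\text{stuff}) \le 1$, with the deficit being the probability the ray dies). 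Let $\beta = \sum_k C_k^{\vlam,\vrho} = g(1)$ be the total renewal probability on a ray; more relevantly, on $\mathcal T_d$ each of the $d$ children offers an independent chance to continue, so the relevant branching parameter is governed by $g$ evaluated near $1/d$.

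Next I would set up the branching/first-moment structure on the tree. Let $\B$ be the blue cluster. I would decompose $\E[|\B|]$ by summing over vertices $v$ of $\mathcal T_d$ the probability that $v$ is ever colored blue. For $v$ at depth $n$, $\P(v \in \B)$ factors (by the tree structure and the strong Markov property at the successive renewal epochs along the path $\sigma(v)$) into a product over the path from $\mathfrak b$ to $v$ of per-edge survival-and-traversal probabilities, which in turn are expressible through the $C_k^{\vlam,\vrho}$. Summing over the $d^n$ vertices at depth $n$ and over $n$, one gets a series whose convergence/divergence is controlled by comparing $d$ against the reciprocal of the growth rate of $C_k^{\vlam,\vrho}$, i.e., against $M$. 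The clean statement is that $\E[|\B|]<\infty$ iff the generating-function-type series $\sum_n d^n (\text{coefficient}_n)$ converges, and the coefficients decay like $M^{-n}$ up to subexponential corrections; hence $\E[|\B|]=\infty$ iff $d \geq M$, i.e. $M \leq d$. Hypothesis \eqref{eq:hyp} is exactly what is needed to control those subexponential (polynomial) corrections: the product $\prod (1+\lambda_i/(1+D_i))$ measures how much the tail weights inflate relative to the pure-renewal count, and the bound $c\ell^m$ guarantees that at the critical radius $M$ the polynomial prefactor does not change which side of the inequality we land on — this is why the ``if and only if'' can be stated in terms of $M \le d$ rather than $M<d$ versus $M=d$ subtleties. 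Hypothesis \eqref{eq:hyp2}, $u(j)v(j)\to 0$, ensures the per-step weights are eventually bounded away from the threshold that would make excursions typically infinite, so that $g$ has a genuine positive radius of convergence and the renewal picture is non-degenerate.

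Concretely, the steps in order: (1) construct the ray process on $\mathbb Z_{\geq 0}$ and prove the renewal probability identity $\P(\text{renewal excursion of length } 2k) = C_k^{\vlam,\vrho}$ via the Dyck-path weighting; (2) express $\P(v\in\B)$ for $v$ at depth $n$ as a product of $n$ independent-across-edges factors built from the ray renewals, using the strong Markov property and the independence of subtrees; (3) sum over $v$ to write $\E[|\B|] = \sum_{n\geq 0} d^n a_n$ for explicit $a_n$, and identify $\limsup a_n^{1/n} = 1/M$ — here \eqref{eq:hyp2} gives the renewal structure and \eqref{eq:hyp} pins down the polynomial corrections so that $\sum d^n a_n$ converges iff $dM^{-1}<1$ strictly fails in the borderline-inclusive sense, i.e. iff $M \le d$; (4) conclude. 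I expect the main obstacle to be step (3): showing that the subexponential corrections coming from the distance-dependence (encoded by the left side of \eqref{eq:hyp}) are genuinely polynomially bounded and do not secretly contribute an exponential factor — equivalently, relating the actual coefficient growth of $g$ to $M$ in a way that survives the comparison with $d^n$ at the critical value. This requires a careful Tauberian/singularity analysis of $g$ near $z=M$ together with the bound \eqref{eq:hyp}, and is where the hypotheses earn their keep; the rest is a fairly standard first-moment branching-process argument adapted from \cite{ced}.
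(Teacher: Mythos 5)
Your outline correctly identifies the broad architecture — a one-dimensional ray process whose renewal probabilities equal the weighted Catalan numbers $C_k^{\vlam,\vrho}$, a first-moment computation on the tree, and a comparison of $d$ against the radius of convergence $M$. But there are two substantive gaps between your plan and a working proof.

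First, your step (2), the decomposition of $\P(v\in\B)$ for a depth-$n$ vertex into ``a product over the path from $\mathfrak b$ to $v$ of per-edge survival-and-traversal probabilities,'' is not correct as stated. The 1D process does not factor vertex-by-vertex: the death and spread rates depend on the current distance to the nearest blue site, so the per-edge contributions are not independent. Renewals do give a product structure, but renewals occur only at a random subset of vertices and there is always a final non-renewal segment. The paper sidesteps the factoring entirely: by independence of disjoint subtrees, $\P(v_{k,i}\text{ eventually blue})$ is directly the 1D tail probability for $Y=\sup_t B_t$, giving $\E[|\B|]=1+\sum_k d^k\P(Y=k)$. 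The real work is then the content of \thref{lem:bigger}: $\P(Y\ge k)\le c_0 k^{1+m}\P(\RR_k)$. This is proved combinatorially, by taking any jump-chain path that reaches height $\ell$ when red first hits $k$, inserting $\ell-2$ down steps before the final up step to map it injectively into a renewal path in $\Gamma_2$, and bounding the ratio of path weights; hypothesis \eqref{eq:hyp} is precisely the estimate that makes this ratio at most $c\ell^m$. Your plan never produces this comparison, and your description of \eqref{eq:hyp} as controlling ``which side of the inequality we land on at the critical radius'' misattributes its role — it is used to relate $\P(Y\ge k)$ to $\P(\RR_k)$, not to analyze the singularity of $g$.

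Second, you correctly sense that the boundary case $M=d$ is the delicate part and gesture at ``a careful Tauberian/singularity analysis,'' but you do not supply it, and this is exactly where \eqref{eq:hyp2} enters in a specific way. The paper uses the continued-fraction representation $g(z)=K[1,a_0z,a_1z,\ldots]$ with $a_j=u(j)v(j)$; hypothesis \eqref{eq:hyp2} says $a_j\to 0$, so on any disk the tail of the continued fraction eventually satisfies Worpitzky's bound $|a_jz|<1/4$, yielding (via \thref{thm:worpitzky}) that $g$ extends to a meromorphic function on all of $\mathbb{C}$. Combined with Pringsheim's theorem (real nonnegative coefficients, so $z=M$ is a singularity, necessarily a pole), one gets $g(d^-)=\infty$ when $M=d$, which is \thref{lem:M_regimes}: $M\le d\iff g(d)=\infty$. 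Your stated rationale for \eqref{eq:hyp2} — that it keeps ``excursions from being typically infinite'' so $g$ has positive radius — is not what it is for. The converse direction, that $\E[|\B|]<\infty$ forces $g(d)<\infty$, also needs the elementary lower bound $\P(Y=k)\ge \P(\RR_{k-1})/(1+\lambda_1+\rho_1)$ (renewal at $k-1$ followed by one more blue step), which your plan omits. In short: the strategy is in the right ballpark, but the two load-bearing lemmas — the polynomial tail-vs-renewal comparison and the Worpitzky/Pringsheim meromorphy argument — are missing, and the per-edge product factorization you propose in their place does not hold.
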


The hypotheses \eqref{eq:hyp} and \eqref{eq:hyp2} are consequences of generalizing the main result of \cite{ced}. It is unclear how much \eqref{eq:hyp} can be relaxed. It is fairly robust. For example, \eqref{eq:hyp} and \eqref{eq:hyp2} hold so long as $\lambda_i/D_i = O(i^{-\epsilon})$ for some $\epsilon >0$. The hypothesis at \eqref{eq:hyp2} is essential to our argument as it allows us to apply \thref{thm:worpitzky}. Note that \eqref{eq:hyp} and \eqref{eq:hyp2} are easily verified in the case that $\vlam$ and $\vrho$ are constant. Since we focus on the weaker requirement of expected coexistence, we are able to sidestep many technical difficulties. As a result, we give a streamlined presentation that clarifies and builds on some of the main ideas from \cite{ced}. 

In Section~\ref{sec:Z}, we analyze distance-dependent chase-escape on the non-negative integers. This lets us connect renewal events in the one-dimensional process to weighted Catalan numbers. In \thref{lem:bigger}, we prove that the probability blue reaches beyond a given distance is comparable to the probability a renewal occurs at that distance. Section~\ref{sec:proof} contains the proof of \thref{thm:main}. \thref{lem:bigger} lets us upper bound $\E[|\B|]$ in terms of $g(d)$. This lets us deduce that $d < M$ implies $\E[|\B|] < \infty$. When $\E[|\B|] < \infty$, it is easy to see that $d \leq M$. To handle the boundary case $M=d$, we employ two classical results. One is Worpitsky's Circle Theorem, which we apply via a continued fraction characterization of $g(z)$. We then apply Pringsheim's Theorem to deduce that $\E[|\B|] = \infty$ when $M=d$.

\section{Distance-dependent chase-escape on the integers} \label{sec:Z}

We begin by defining distance-dependent chase-escape on the non-negative integers, which is equivalent to the case $\mathcal T_1$ with $\mathfrak b = 0$ and $\0 =1$. We indicate the state of the vertex $n$ at time $t$ by $s_t (n) \in \{ w, b, r, \dag \}$, which indicates if the vertex is white, blue, red or dead.  Initially, $s_0 (0) =  b $, $s_0 (1) =  r $ and $s_0 (n) = w$ for any $n > 1$, and the process follows the dynamics of distance dependent-chase escape with rates $\vlam$ and $\vrho$. 
For each time $t \geq 0$, we write $B_t = \sup \{ n \colon s_t (n) = b  \} $ and $R_t = \sup \{ n \colon s_t (n) = r \}$. Define the maximum integer reached by $B_t$ as
\begin{align}
    Y &= \sup \{ B_t \colon t \geq 0  \}. \label{eq:Ydef}
\end{align}

As in \cite{ced}, we are mainly interested in times at which the process renews. For each vertex $k \geq 0$, call
\[
    \RR_k = \{ B_t = k, \; R_t = k+1 \text{ and }  s_t (n) \neq \dag  \text{ for all }  n > k+1 \}
\]
a \emph{renewal event at the vertex $k$}. At these points, the process exhibits its initial conditions with a translation by $k$. For $t \geq 0$, define the event $ A_t = \{ s_t (k) \neq \dag \text{ for all } k \}  $ that there are no killed red sites at time $t$.

We follow the evolution of distance-dependent chase-escape with a Markov chain. Let $S_t = R_t - B_t$ be the distance between the rightmost blue and red particles at time $t \geq 0$. We define a discrete version of $(S_t)_{t \geq 0}$ by considering the collection of times where a particle changes its state.  Let $\tau (0) = 0 $ and 
\[
    \tau (i) = \inf \{ t \geq \tau (i - 1) \, : \, S_t \neq S_{\tau (i - 1) } \text{ or } \mathds{1} (A_{t} ) = 0  \}.
\]
The \emph{jump chain} $J = (J_i)_{i \in \mathbb{Z}_+}$ of $(S_t)_{t \geq 0}$ is
defined by 
\[
    J_i = 
    \begin{cases}
    S_{\tau(i)},    & \mathds{1} (A_{\tau(i)}) = 1 \\
    0               & \text{otherwise}
    \end{cases}.
\]
We say that a jump chain is \emph{living at step $n$} if $J_i > 0$ for all $0 \leq i \leq n$.
The transition probabilities of the jump Markov chain are, for each $j > 0$,
\[
    p_{j, j+1} = \frac{\lambda_j}{1 + \lambda_{j} + D_j}, 
    \quad
    p_{j, j-1} = \frac{1}{1 + \lambda_{j} + D_j}, 
    \quad
    p_{j,0} = \frac{D_j}{1 + \lambda_{j} + D_j},
\]
where $D_j = \sum_{i = 1}^j \rho_i $. 

On the event $\RR_k$, the path of the jump chain $J$ (up to the renewal time) can be identified with a Dyck path of length $2k$ translated by one vertical unit. Moreover, the weights that we considered in~\eqref{eq:uv} correspond to the transition probabilities of the jump chain: $p_{j, j+1} = u(j - 1)$ and $p_{j, j-1} = v(j -2)$. It follows that
\begin{align}
    \P_{\vlam, \vrho}(\RR_k) = C_{k}^{\vlam, \vrho }\label{eq:Rcat}
\end{align}
the weighted Catalan number defined at \eqref{eq:Ck} that uses the weights at \eqref{eq:uv}.

We use ideas from \cite[Lemma 2.2]{ced} to prove that $\P(Y \geq k)$ is comparable to $\P(\RR_k)$. The difficulty is that the event $\{Y\geq k\}$ includes \emph{all} realizations for which blue reaches $k$, while $\RR_k$ only includes realizations which have a renewal at $k$. 

\begin{lemma} \thlabel{lem:bigger}
 Let $C$ and $m$ be as in \eqref{eq:hyp}. There exists $c_0$ that does not depend on $k$ such that 
    \[  \P (Y \geq k) \leq c_0 k^{1 + m} \P (\RR_k) \] for all $k \geq 1$.
\end{lemma}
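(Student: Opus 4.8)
The plan is to decompose $\{Y\ge k\}$ according to the last renewal that occurs before blue reaches site $k$, and to compare the pieces with $\P(\RR_k)=C_k^{\vlam,\vrho}$ from \eqref{eq:Rcat}. Throughout I work with the jump chain $J$, recalling that each down-step of $J$ is an advance of the rightmost blue site and that $\RR_j$ is the event that $J$ sits at state $1$ with blue at site $j$. The central quantity is
\[
q_\ell \;=\; \P\bigl(\text{starting from a renewal configuration, blue advances by at least $\ell$ sites before the next renewal}\bigr),
\]
with the convention that on the event that no further renewal ever occurs we merely require blue to advance by at least $\ell$ at all.

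Two structural facts come first. \emph{(i) Each $\RR_j$ is realised at most once.} Between two times at which $\RR_j$ holds, the rightmost blue site cannot move, but leaving state $1$ and returning to it forces a down-step of $J$, which would move that site; hence ``$\RR_j$ occurs'' is an event carried by a single stopping time, and on it the strong Markov property produces a fresh copy of the process translated by $j$. \emph{(ii) Supermultiplicativity.} For $0\le j\le k$, the event that $\RR_j$ occurs and that the translated process thereafter realises $\RR_{k-j}$ is contained in $\{\RR_k\text{ occurs}\}$ and, by (i), has probability $C_j^{\vlam,\vrho}C_{k-j}^{\vlam,\vrho}$; hence $C_j^{\vlam,\vrho}C_{k-j}^{\vlam,\vrho}\le C_k^{\vlam,\vrho}$.

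Now, on $\{Y\ge k\}$, let $\RR_j$ with $0\le j<k$ be the last renewal occurring before blue reaches site $k$: it exists since $\RR_0$ holds at time $0$, and it is unique since every renewal occurring afterwards has blue at a site $\ge k$. Applying (i) at the time $\RR_j$ occurs yields the decomposition
\[
\P(Y\ge k)\;=\;\sum_{j=0}^{k-1}\P(\RR_j)\,q_{k-j}\;=\;\sum_{j=0}^{k-1}C_j^{\vlam,\vrho}\,q_{k-j}.
\]
So the lemma reduces to the estimate
\[
q_\ell\;\le\; c\,\ell^m\,C_\ell^{\vlam,\vrho}\qquad(\ell\ge 1)
\]
for a constant $c=c(\vlam,\vrho)$: combining it with (ii), and using $(k-j)^m\le k^m$ together with the fact that each of the $k$ summands $C_j^{\vlam,\vrho}C_{k-j}^{\vlam,\vrho}$ is at most $C_k^{\vlam,\vrho}$, one gets $\P(Y\ge k)\le c\,k^m\cdot k\,C_k^{\vlam,\vrho}$, which is \thref{lem:bigger} with $c_0=c$.

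Proving $q_\ell\le c\,\ell^m C_\ell^{\vlam,\vrho}$ is the heart of the argument and the step I expect to be hardest; it is where hypothesis \eqref{eq:hyp} is used. I would split the event behind $q_\ell$ into: (a) $J$ completes an excursion above state $1$ that advances blue by at least $\ell$; and (b) blue makes its $\ell$ advances while $J$ is high and then the excursion aborts — either $J$ jumps to state $0$, or a red vertex several sites ahead of blue dies and blue coasts up to it. Part (a) is handled by comparing a tail $\sum_{m\ge\ell}$ of weighted excursion probabilities with $C_\ell^{\vlam,\vrho}$, using that each excursion weight is dominated by the corresponding weighted Catalan number. In part (b), the trajectory of $J$ up to the $\ell$-th advance is a nonnegative lattice path from height $1$ to some height $h$ carrying exactly $\ell$ descents, and one must bound the total weight of such paths summed over $h$. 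The naive move — append the steepest descent from height $h$ back to $1$, producing a Dyck path of length $2(\ell+h-1)$ and hence a bound involving $C_{\ell+h-1}^{\vlam,\vrho}\prod_{i=2}^{h}(1+\lambda_i+D_i)$ — is, on its own, too crude for the $h$-sum to converge usefully; it must be balanced against the fast decay in $h$ of the probability that $J$ reaches height $h$, and organised using (ii). What ultimately survives is a product of factors $1+\lambda_i/(1+D_i)=(1-p_{i,i+1})^{-1}$ ranging over heights up to order $\ell$, which \eqref{eq:hyp} bounds by $c\,\ell^m$. Carrying out this balancing so that the $h$-sum converges with only a polynomial loss, and doing the far-ahead-death bookkeeping in (b), is the real work of the proof.
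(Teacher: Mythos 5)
Your decomposition — by the last renewal $\RR_j$ before blue reaches $k$, combined with the supermultiplicativity $C_j^{\vlam,\vrho}C_{k-j}^{\vlam,\vrho}\le C_k^{\vlam,\vrho}$ — is valid, and genuinely different from what the paper does. The paper instead conditions on the value $\ell$ of the jump chain at the moment \emph{red} first reaches site $k$, which splits $\{Y\ge k\}$ into a ``red arrives with headway $\ell$'' stage and a deterministic ``blue closes the gap'' stage with probability $\sigma(\ell)=\prod_{n=1}^\ell (1+D_n)^{-1}$. That decomposition yields a quantity $q_\ell$ (not your $q_\ell$) supported on paths of a \emph{fixed} length $2k-\ell-1$, and the comparison between $q_\ell$ and $q_2$ then comes from a concrete injection: insert $\ell-2$ down-steps before the final up-step. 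The ratio of weights is an explicit product of factors $1+\lambda_i/(1+D_i)$, which is exactly the form that hypothesis \eqref{eq:hyp} bounds by $c\ell^m$. That is the whole role of \eqref{eq:hyp}.

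The gap in your argument is the estimate you yourself flag as ``the real work'': $q_\ell \le c\,\ell^m\,C_\ell^{\vlam,\vrho}$. This is not a bookkeeping detail but the entire technical content of the lemma, and your sketch for it (split into completed excursions vs.\ aborted ones; balance the crude Dyck-path completion $C_{\ell+h-1}^{\vlam,\vrho}\prod_{i=2}^h(1+\lambda_i+D_i)$ against the probability of reaching height $h$) is left unexecuted, with the hard $h$-sum convergence and far-ahead-death bookkeeping explicitly deferred. It is also not clear this estimate follows without a device analogous to the paper's insertion of down-steps — you are comparing paths that \emph{avoid} height $1$ to all Dyck paths, which is structurally harder than the paper's comparison of length-$(2k-\ell-1)$ paths to length-$(2k-3)$ paths. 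As written, the proposal reduces the lemma to a claim of comparable difficulty without proving it.

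One smaller remark: the trivial bound $q_\ell \le \P(Y\ge\ell)$, combined with the conclusion of the lemma itself, would only give $q_\ell\lesssim \ell^{1+m}C_\ell^{\vlam,\vrho}$ and hence $\P(Y\ge k)\lesssim k^{2+m}C_k^{\vlam,\vrho}$ — both circular and a weaker exponent — so you genuinely need a direct, self-contained proof of the sharper $q_\ell\lesssim\ell^m C_\ell^{\vlam,\vrho}$ for your route to close.
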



\begin{proof}

    Define the \emph{height profile} of a (jump chain) path $J = (J_0, J_1, \ldots, J_m)$ to be the vector $h (J) = (h_1 (J), \ldots, h_{m+1} (J) ) $ which indicates the frequency of each height reached by $J$. Formally, $h_i(J)  \coloneqq \sum_{\ell = 0}^{m} \mathds{1} (J_{\ell} = i) $.
    The probability that the distance-dependent chase-escape follows the path $J$ in its jump chain is
    \[
        p (J) = \prod_{i = 1}^{k} \lambda_i^{h_i(J)} \prod_{j = 1}^{m + 1} \left( \frac{1}{1 + \lambda_j + D_j} \right)^{h_j (J)}.
    \]
    
    It is necessary and sufficient for the occurrence of $\{ Y \geq k \}$ to \emph{first} have the rightmost red particle reach $k$, and \emph{second} have the rightmost blue particle reach $k$ after $\ell$ steps. An advantage of this perspective is that the occurrence of second stage only depends on $\ell$. In particular, it does not depend on the behavior of red beyond $k$. We now formally decompose jump chains corresponding to $\{Y \geq k\}$ into these two stages. 
    
        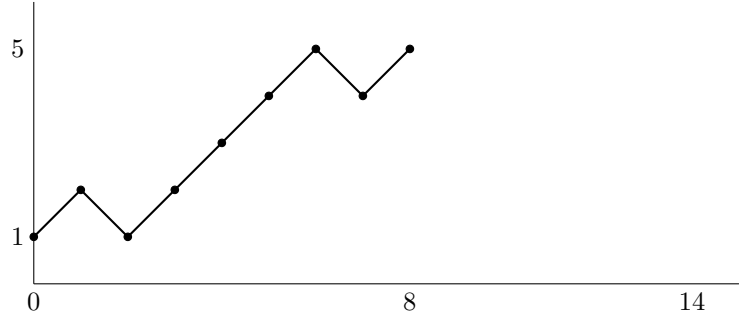
\begin{figure} 
\begin{center}
\begin{tikzpicture}[scale = 1.25]

\draw (0,0) -- (7.5,0);
\draw	(0,0)   node[anchor=north] {0}
		(4,0)   node[anchor=north] {8}
		(7,0) node[anchor=north] {14};
\draw	(0,.5)  node[anchor=east] {1}
		(0,2.5) node[anchor=east] {5};
\draw (0,0) -- (0,3);
\draw[thick] (0,.5) -- (.5,1) -- (1,.5) -- (1.5,1) -- (2,1.5) --(2.5, 2) -- (3,2.5) -- (3.5, 2) -- (4,2.5);
\draw[fill] (0,.5)  circle [radius=0.04]
            (.5,1)  circle [radius=0.04]
            (1,.5) circle [radius=0.04]
            (1.5,1) circle [radius=0.04]
            (2,1.5)  circle [radius=0.04]
            (2.5,2) circle [radius=0.04]
            (3,2.5) circle [radius=0.04]
            (3.5,2) circle [radius=0.04]
			(4,2.5) circle [radius=0.04];
\end{tikzpicture}
\caption{
	A path $\gamma \in \Gamma_{5}$ when $k=7$.
} \label{fig:Gl}
\end{center}
\end{figure}

    For a jump chain $\gamma$ corresponding to a configuration from $\{Y \geq k\}$, let  $\ell \in \{ 1, \ldots, k-1 \}$ be the value of the jump chain when red arrives to $k$. 
    After this, we enter the second stage of $\{ Y \geq k \}$; the rightmost blue particle must advance from $\ell$ to $k$. So, the probability of the second stage is
    \begin{equation}
        \sigma(\ell) =  \prod_{n = 1}^{\ell} 
                        \P \left( \begin{array}{c} 
                        \text{blue advances 1 step before}\\
                                 \text{any site $k - \ell + n, \ldots , k$ dies}  
                                 \end{array} \right)
                 = \prod_{n = 1}^{\ell} \frac{1}{1 + D_n}.
    \end{equation}
    Let $\Gamma_{\ell}$ be the set of all living jump chain paths of length $2k - \ell - 1$ that start at $(0,1)$ and end with an upward step to $(2k - \ell -1, \ell)$. See Figure~\ref{fig:Gl}.

    For $\gamma \in \Gamma_{\ell} $, the probability that  $Y \geq k$ and the first $2k - \ell -1$ steps of the jump chain follow $\gamma$ is
     $ q(\gamma) \coloneqq p(\gamma) \sigma (\ell) $. 
    Setting $q_{\ell} \coloneqq \sum_{\gamma \in \Gamma_{\ell}} q(\gamma)$, we arrive at the following decomposition of $\{Y \geq k\}$:
    \begin{align}
	    \P ( Y \geq k ) = \sum_{\ell=2}^k \sum_{\gamma \in \Gamma_\ell} p(\gamma) \sigma(\ell) = \sum_{\ell = 2 }^{k} q_{\ell} . \label{eq:Yl}
	\end{align}
        
   A subset of $\RR_k$ is the collection of processes which follow jump chains in $\Gamma_2$, and subsequently have blue advance by one, then red advance by one, followed by blue advancing one. This gives the bound 
    \[
        q_2 \frac{\lambda_2}{(1 + \lambda_1 + D_1) (1 + \lambda_2 + D_2)^2} \leq \P (\RR_k).
    \]
    We will now prove that $q_{\ell}$ for $\ell \geq 2$ is comparable to $q_2$.

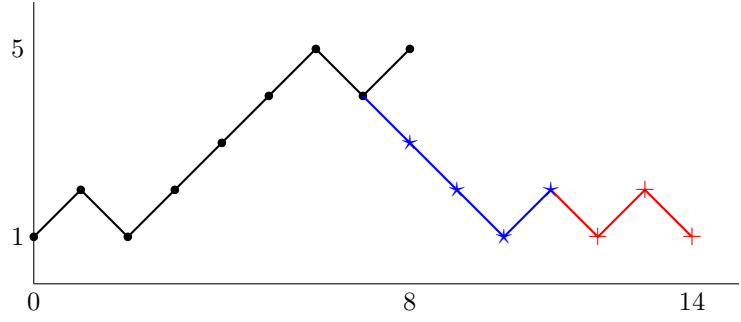
\begin{figure} 
\begin{center}
\begin{tikzpicture}[scale = 1.25]

\draw (0,0) -- (7.5,0);
\draw	(0,0)   node[anchor=north] {0}
		(4,0)   node[anchor=north] {8}
		(7,0) node[anchor=north] {14};
\draw	(0,.5)  node[anchor=east] {1}
		(0,2.5) node[anchor=east] {5};
\draw (0,0) -- (0,3);
\draw[thick] (0,.5) -- (.5,1) -- (1,.5) -- (1.5,1) -- (2,1.5) --(2.5, 2) -- (3,2.5) -- (3.5, 2) -- (4,2.5);
\draw[fill] (0,.5)  circle [radius=0.04]
            (.5,1)  circle [radius=0.04]
            (1,.5) circle [radius=0.04]
            (1.5,1) circle [radius=0.04]
            (2,1.5)  circle [radius=0.04]
            (2.5,2) circle [radius=0.04]
            (3,2.5) circle [radius=0.04]
            (3.5,2) circle [radius=0.04]
			(4,2.5) circle [radius=0.04];

\draw[thick, blue]  (3.5,2) -- (5,.5) -- (5.5,1);

\draw[thick, red]  (5.5,1) -- (6,.5) -- (6.5,1) -- (7,.5);
\node[blue] at (5.5,1) { \Large $\star$};
\node[blue] at (5,.5) {\Large $\star$};
\node[blue] at (4.5,1) {\Large $\star$};
\node[blue] at (4,1.5) {\Large $\star$};
\node[red] at (6,.5) {$+$};
\node[red] at (6.5,1) {$+$};
\node[red] at (7,.5) {$+$};
\end{tikzpicture}
\caption{
	Let $k=7$. The black line with dots is a path $\gamma \in \Gamma_{5}$. The blue line with stars is the modified path $\tilde \gamma \in \Gamma_2$. The red line with pluses is the extension of $\tilde \gamma$ to a jump chain in $\RR_7$.  
} \label{fig:path}
\end{center}
\end{figure}

    Given $\gamma \in \Gamma_{\ell}$, we obtain $\tilde{\gamma} \in \Gamma_{2}$ inserting $\ell - 2$ downward steps before the last upward step. See Figure~\ref{fig:path}.
    The paths $\gamma$ and $\tilde{\gamma}$ agree on the first $2k - \ell -2$ steps, so
    \begin{align}
        q (\gamma) &= \frac{\sigma (\ell)}{ \sigma (2) \prod_{i = 1}^{\ell -2} (1 + \lambda_i + D_i)^{-1} } q (\tilde{\gamma}) \\
        &= \f{ \prod_{n = 1}^{\ell} \frac{1}{1 + D_n}}{ \prod_{n = 1}^{2} \frac{1}{1 + D_n}  \prod_{i=1}^{\ell-2}(1 + \lambda_i + D_i)^{-1} }  q (\tilde{\gamma})
    \end{align}
    Simplifying gives
    \begin{align}
                   q(\gamma) &= q (\tilde{\gamma}) \frac{ (1 + D_1 + \lambda_1) (1 + D_2 + \lambda_2) }{ (1 + D_{\ell})(1 + D_{\ell-1}) } 
                   \prod_{i = 3}^{\ell -2} \left( 1 + \frac{\lambda_i}{1 + D_i} \right) . 
    \end{align}
    
    Taking $C$ and $m$ as at \eqref{eq:hyp} set $$c_0 = c \frac{ (1 + D_1 + \lambda_1) (1 + D_2 + \lambda_2) }{ (1 + D_{\ell})(1 + D_{\ell-1}) }.$$
    It follows from our hypothesis \eqref{eq:hyp} that for all $\gamma \in \Gamma_\ell$ 
    $$q(\gamma) \leq c_0 \ell^m q(\tilde \gamma).$$ 
    As the map $\gamma \in \Gamma_\ell \mapsto \tilde \gamma \in \Gamma_2$ is injective, we have
\begin{align}
	q_\ell \leq \sum_{\gamma \in \Gamma_\ell} c_0 \ell^{m} q(\tilde \gamma) \leq c_0 \ell^{m} \sum_{\gamma \in \Gamma_2} q(\gamma) =  c_0 \ell^{m} q_2. \label{eq:ql2}
\end{align}
Applying \eqref{eq:ql2} at \eqref{eq:Yl} and summing completes the lemma.

\end{proof}

\section{Proof of \thref{thm:main}} \label{sec:proof}

\subsection{Preliminaries}
This treatment is similar to that in \cite[Section 3]{ced}. For completeness, we reproduce and generalize the necessary elements. Given a sequence $(c_n)_{n \geq 0}$, define the formal continued fraction
\begin{align}
K[c_0,c_1,\hdots]  := \cfrac{ c_0}{ 1- \cfrac{c_1}{1-  \ddots}}.\label{eq:K}
\end{align}
Let \begin{align}
a_j := u(j)v(j)	=\f{\lambda_{j+1}}{(1+\lambda_{j+1} + D_{j+1})(1+\lambda_{j+2} +D_{j+2}) }.
\end{align}
It follows from \cite[Chapter 5]{goulden1985combinatorial} that 
\begin{align}g(z) = f(z):= K[1,a_0z,a_1z,\hdots] \label{eq:f}\end{align}
for all $|z| < M$. 
A classical theorem of Worpitzky lets us prove that $f$ is \emph{meromorphic}  i.e.\  holomorphic outside of a set of isolated poles. 

\begin{theorem}[Worpitzky Circle Theorem]\thlabel{thm:worpitzky}
	Let $c_j\colon D \to \{|w| < 1/4\}$ be a family of analytic functions over a domain $D \subseteq \mathbb C$. Then
	$K[1,c_0(z),c_1(z),\hdots]$ 
	converges uniformly for $z$ in any compact subset of $D$.
\end{theorem}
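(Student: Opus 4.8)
The plan is to reduce the uniform convergence to the classical (numerical) Worpitzky theorem by a normal-families argument, so that the only analytic input needed is pointwise convergence. For $c\in\mathbb C$ write $\phi_c$ for the M\"obius map $w\mapsto c/(1-w)$; then for each fixed $z$ the continued fraction $K[1,c_0(z),c_1(z),\ldots]$ is the value of the composition $\phi_1\circ\phi_{c_0(z)}\circ\phi_{c_1(z)}\circ\cdots$, and its $n$-th approximant is
\[
 f_n(z)\;=\;\phi_1\bigl(\phi_{c_0(z)}\circ\cdots\circ\phi_{c_{n-1}(z)}(0)\bigr).
\]
I will establish three facts: (i) each $f_n$ is holomorphic on $D$; (ii) $\{f_n\}$ is uniformly bounded on $D$; and (iii) $f_n(z)$ converges for every $z\in D$. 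Granting these, Vitali's theorem (a locally bounded sequence of holomorphic functions on a domain that converges at every point converges locally uniformly, to a holomorphic limit) gives that $f_n$ converges uniformly on every compact subset of $D$, which is the assertion.

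The engine behind (i) and (ii) is the invariance of the \emph{Worpitzky disk} $V:=\{\,|w|\le 1/2\,\}$ under the maps $\phi_c$ with $|c|\le 1/4$: if $w\in V$ then $|1-w|\ge 1/2$, so $\phi_c(w)$ is well defined (the pole $w=1$ of $\phi_c$ lies at distance $\ge 1/2$ from $V$) and $|\phi_c(w)|=|c|/|1-w|\le 2|c|\le 1/2$, i.e.\ $\phi_c(V)\subseteq V$. Since $0\in V$ and $|c_j(z)|<1/4$ for all $j$ and all $z\in D$, every partial composition occurring in the formula for $f_n(z)$ takes its value in $V$. In particular no denominator ever vanishes, so $f_n$ is a rational expression in the holomorphic functions $c_0(z),\ldots,c_{n-1}(z)$ with nowhere-vanishing denominator, hence holomorphic on $D$; and the innermost value lying in $V$ forces $|1-(\cdot)|\ge 1/2$, so $|f_n(z)|=|\phi_1(\cdot)|\le 2$ for every $n$ and every $z\in D$.

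For (iii), fix $z_0\in D$ and set $a_j:=c_j(z_0)$, so $|a_j|<1/4$. The classical Worpitzky Circle Theorem for numerical continued fractions then asserts that $K[1,a_0,a_1,\ldots]$ converges, i.e.\ $f_n(z_0)$ has a limit. (One may also see this directly from the nested value regions $S_n(V)$, where $S_n:=\phi_{c_0(z_0)}\circ\cdots\circ\phi_{c_{n-1}(z_0)}$: one has $S_{n+1}(V)\subseteq S_n(V)$ and all approximants beyond stage $n$ lie in $S_n(V)$, the classical content being precisely that $\mathrm{diam}\,S_n(V)\to 0$.) This gives (iii), and together with (i) and (ii) the proof concludes by Vitali's theorem, which in passing shows the limit is holomorphic on $D$.

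The main obstacle is that Worpitzky's continued fraction lies exactly on the boundary of convergence: the maps $\phi_c$ with $|c|\le 1/4$ send $V$ into itself but \emph{not} with any uniform contraction factor --- in the extremal constant case $c_j\equiv 1/4$ the relevant fixed point is parabolic and the approximants converge only at rate $O(1/n)$ --- so one cannot extract a uniform geometric estimate on $|f_{n+1}-f_n|$ and must instead obtain uniformity ``softly,'' from local boundedness together with Vitali's theorem. The one other point requiring care is (i), namely checking that no pole of any $\phi_{c_j(z)}$ is ever encountered as $z$ ranges over $D$; this is exactly what the inclusion $\phi_c(V)\subseteq V$ guarantees.
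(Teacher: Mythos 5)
The paper does not prove this theorem; it is stated and used as a classical citation, so there is no internal proof to compare against. Judged on its own, your argument is correct and well organized. The decomposition into (i) holomorphy of approximants, (ii) uniform boundedness, and (iii) pointwise convergence, followed by Vitali's theorem, is a legitimate normal-families route to locally uniform convergence, and it is a genuinely different (softer) mechanism than the one usually found in the continued-fraction literature, which derives uniformity directly from quantitative nested-disk estimates.

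Two remarks on the internals. First, your forward-invariance computation for $V=\{|w|\le 1/2\}$ is right: for $|w|\le 1/2$ one has $|1-w|\ge 1/2$, so $\phi_c$ has no pole on $V$ and $|\phi_c(w)|\le 2|c|\le 1/2$; since the innermost value $\phi_{c_{n-1}(z)}(0)=c_{n-1}(z)$ already lies in $V$, every intermediate composition stays in $V$, which simultaneously gives that $f_n$ is holomorphic (denominators bounded away from $0$) and that $|f_n|\le 2$ on all of $D$. Second, step (iii) is where the actual analytic content lives, and you quite openly import it as a black box: the numerical Worpitzky theorem (equivalently, $\operatorname{diam} S_n(V)\to 0$). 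Since the paper itself invokes the result as classical, treating the scalar statement as known and then upgrading it to the analytic-family statement via Vitali is a reasonable division of labor; but be aware that you have not \emph{reproved} Worpitzky --- you have shown that the scalar version plus Montel/Vitali implies the locally-uniform version for analytic coefficient families. Your parenthetical note on the parabolic fixed point in the extremal case $c_j\equiv 1/4$ is an accurate explanation of why a naive geometric contraction estimate fails and why a soft compactness argument is the right substitute.
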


\begin{corollary} \thlabel{lem:meromorphic}
Assuming \eqref{eq:hyp2}, it holds that $f$ is meromorphic on $\mathbb C$.
\end{corollary}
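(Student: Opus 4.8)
The plan is to prove meromorphy on each disk $\{|z|<R\}$ separately and then patch the pieces together, with \thref{thm:worpitzky} doing the analytic work after we split the continued fraction past the point where the circle condition kicks in uniformly. Fix $R>0$. Since \eqref{eq:hyp2} says exactly that $a_j=u(j)v(j)\to 0$, there is an index $N=N(R)$ with $|a_j|<\tfrac1{4R}$ for every $j\ge N$. Then on $\Delta:=\{|z|<R\}$ the functions $c_j(z):=a_{N+j}z$ are analytic and satisfy $|c_j(z)|<\tfrac1{4R}\cdot R=\tfrac14$, so \thref{thm:worpitzky} applies and the tail
\[
  t_N(z):=K[1,a_Nz,a_{N+1}z,\dots]
\]
converges uniformly on compact subsets of $\Delta$. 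In Worpitzky's regime the (rational, since the $c_j$ are polynomials in $z$) convergents of this continued fraction have non-vanishing denominators and stay bounded, so they are holomorphic on $\Delta$, and a locally uniform limit of holomorphic functions is holomorphic; hence $t_N$ is holomorphic on $\Delta$.

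Next I would reattach the finite head $a_0z,\dots,a_{N-1}z$. Write the partial denominators of $f=K[1,a_0z,a_1z,\dots]$ as $d_1,d_2,\dots$, so that $f=1/d_1$ and $d_j=1-\tfrac{a_{j-1}z}{d_{j+1}}$. Unwinding definition \eqref{eq:K} one checks directly that $d_{N+1}=1/t_N$, after which $d_N,d_{N-1},\dots,d_1$ and finally $f=1/d_1$ are obtained from $t_N$ by finitely many rational operations with coefficients polynomial in $z$. Equivalently, $f$ is a Möbius transformation of $t_N(z)$ with polynomial coefficients,
\[
  f(z)=\frac{\alpha(z)\,t_N(z)+\beta(z)}{\gamma(z)\,t_N(z)+\delta(z)},
\]
where $\alpha,\beta,\gamma,\delta$ are polynomials depending only on $N$ and on the fixed constants $a_0,\dots,a_{N-1}$. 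In particular the numerator and denominator are holomorphic on $\Delta$.

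The one point requiring care — and the main obstacle — is to rule out the denominator $\gamma(z)t_N(z)+\delta(z)$ vanishing identically, i.e.\ to know $f\not\equiv\infty$. This is where I would use that $g$ has positive radius of convergence: since $C_k^{\vlam,\vrho}=\P(\RR_k)\le 1$ by \eqref{eq:Rcat}, we have $M\ge 1>0$, so by \eqref{eq:f} the function $f=g$ is finite and holomorphic on the nonempty open set $\{|z|<\min(M,R)\}\subseteq\Delta$. A function holomorphic on the connected domain $\Delta$ that is not identically zero has only isolated zeros, so $f$ extends to a meromorphic function on $\Delta$ whose poles lie among the zeros of $\gamma t_N+\delta$. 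Finally, letting $R\to\infty$: the meromorphic extensions obtained for different $R$ all agree with $g$ in a neighbourhood of $0$, hence with one another by the identity theorem, and therefore glue to a single meromorphic function on $\mathbb C$. Everything except the head–tail bookkeeping and this non-degeneracy check is a direct invocation of \thref{thm:worpitzky}.
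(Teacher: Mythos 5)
Your proof follows essentially the same route as the paper: fix a disk, use \eqref{eq:hyp2} to find $N$ so that Worpitzky's theorem applies to the tail $K[1,a_Nz,a_{N+1}z,\dots]$ and gives an analytic function there, and then reattach the finite head to exhibit $f$ as a quotient (Möbius transform) of analytic functions, finally letting the radius tend to infinity. The one place you go beyond the paper is in explicitly ruling out the degenerate case where the denominator vanishes identically (via $M\geq 1>0$ and the identity theorem); the paper's proof tacitly assumes this when it concludes from ``$f$ is a quotient of two analytic functions,'' so your extra check is a welcome bit of rigor rather than a deviation.
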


\begin{proof}
We will prove that $f$ is meromorphic for all $z \in \Delta = \{ |z| < r_0\}$ with $r_0>0$ arbitrary.  Let $T_j(z) := K[a_jz, a_{j+1} z, \hdots]$
be the tail of the continued fraction so that $f(z) = K[1,a_0z, \hdots, a_{j-1}z, T_j(z)]$.  By the hypothesis \eqref{eq:hyp2}, we have $|a_j|\downarrow 0$ as $j \to \infty$.
 It follows that for some $j= j(r_0)$ large enough, $|a_k z| \leq 1/4$ for all $k \geq j$ and $z\in \Delta$. \thref{thm:worpitzky} ensures that $|T_j(z)| < \infty$ and the partial continued fractions $K[a_j z, \hdots, a_n z]$
are analytic (again by \thref{thm:worpitzky}) and converge uniformly to $T_j$ for $z \in \Delta$.
 Thus, $T_j$ is a uniform limit of analytic functions and is therefore analytic on $\Delta$. 
 We can then write
$f(z) = K[1,a_0z , \hdots, a_{j-1} z, T_j(z)].$
Since each $a_i z$ is a linear function in $z$, $f$ is a quotient of two analytic functions.
\end{proof}

Our next lemma requires a classical theorem from complex variable theory (see \cite[Theorem IV.6]{flajolet2009analytic} for example). 

\begin{theorem}[Pringsheim's Theorem] \thlabel{thm:pringsheim}
	If $\varphi(z)$ is representable at the origin by a power series $\varphi(z) = \sum_{n=0}^\infty a_n z^n$ that has real coefficients $a_n\geq0$ and radius of convergence $M$, then the point $z = M$ is a singularity of $\varphi(z)$.
\end{theorem}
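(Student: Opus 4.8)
The plan is to prove Pringsheim's Theorem by the classical re-expansion argument. Assuming that $z=M$ is a regular (non-singular) point of $\varphi$, I would re-expand $\varphi$ in a Taylor series about a point on the positive real axis just inside the disc of convergence, observe that this new expansion again has nonnegative coefficients, and use that to deduce that $\sum a_n z^n$ converges at a real point strictly larger than $M$ — contradicting that $M$ is the radius of convergence.

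First I would dispose of the trivial cases: the statement is only meaningful for $0<M<\infty$, and after the substitution $z\mapsto Mz$ (which replaces each $a_n$ by $a_nM^n\ge 0$ and rescales the radius of convergence to $1$) we may assume $M=1$. Now suppose, for contradiction, that $z=1$ is a regular point of $\varphi$. Then $\varphi$ continues holomorphically from $\{|z|<1\}$ to an open set $V$ which contains both the unit disc and a full disc centred at the point $1$; the essential feature is that $V$ is an open neighbourhood of $1$, not merely of the half-open segment $[0,1)$. Since $1-\delta\to 1\in V$ as $\delta\downarrow 0$ and $V$ is open, I can fix $\delta\in(0,1)$ small together with some $\eta>0$ so that the open disc $\{\,|w-(1-\delta)|<\delta+\eta\,\}$ lies entirely inside $V$. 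Write $z_0=1-\delta>0$.

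The heart of the argument is the sign observation. Because $\varphi$ is holomorphic on a disc of radius $\delta+\eta$ about $z_0$, its Taylor expansion there converges on that disc, and termwise differentiation of $\sum_n a_n z^n$ at the interior point $z_0<1$ identifies the coefficients as $\varphi^{(m)}(z_0)/m!=\sum_{n\ge m}a_n\binom{n}{m}z_0^{\,n-m}$, each of which is $\ge 0$ because $a_n\ge 0$ and $z_0>0$. Evaluating this Taylor series at the real point $1+\eta'$ for any $0<\eta'<\eta$ (so that $|(1+\eta')-z_0|=\delta+\eta'<\delta+\eta$, hence the series converges and equals the finite value $\varphi(1+\eta')$), we obtain a double sum of nonnegative terms, so Tonelli's theorem permits summing over $n$ first:
\[
\varphi(1+\eta')=\sum_{m\ge0}\Big(\sum_{n\ge m}a_n\binom{n}{m}z_0^{\,n-m}\Big)(\delta+\eta')^m=\sum_{n\ge0}a_n\sum_{m=0}^n\binom{n}{m}z_0^{\,n-m}(\delta+\eta')^m=\sum_{n\ge0}a_n(1+\eta')^n,
\]
where the last equality uses $z_0+\delta+\eta'=1+\eta'$ and the binomial theorem. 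Thus $\sum_n a_n(1+\eta')^n<\infty$ with $1+\eta'>1=M$, contradicting the fact that $M$ is the radius of convergence of $\sum a_n z^n$. Therefore $z=M$ cannot be a regular point, i.e.\ it is a singularity of $\varphi$.

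I expect the only delicate step to be the second one: producing $z_0=1-\delta$ together with a radius $\delta+\eta$ \emph{strictly larger} than $\delta$ and still having the corresponding disc contained in $V$. This is exactly where the hypothesis that $z=1$ is regular (equivalently, that $V$ is a neighbourhood of the point $1$) is used, and it is what guarantees that $1+\eta'$ falls inside the new disc of convergence. Everything else — the differentiated-coefficient formula and the reordering of the resulting nonnegative double series — is unconditionally valid by term-by-term differentiation inside the radius of convergence and by Tonelli's theorem.
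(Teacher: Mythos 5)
The paper does not supply a proof of Pringsheim's Theorem: it is stated as a classical fact with a pointer to \cite[Theorem~IV.6]{flajolet2009analytic}, and is then used as a black box in \thref{lem:M_regimes}. So there is no ``paper proof'' to compare against; your task was in effect to reconstruct the classical argument, which you did.

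Your re-expansion proof is correct and is the standard one (essentially the Flajolet--Sedgewick/Hille argument). The one place you flag as delicate is indeed the place where care is needed: the assertion that, assuming $1$ is regular, one can choose $z_0=1-\delta$ on the real axis and a radius $\delta+\eta$ with $\eta>0$ so that $B(z_0,\delta+\eta)$ lies in the domain of holomorphy. Your justification (``$1-\delta\to 1\in V$ and $V$ is open'') is a little loose as written, because what must be checked is not merely that $z_0\in V$ but that the \emph{whole} disc $B(z_0,\delta+\eta)$ avoids the singular set; the potentially problematic points are the two arcs of the unit circle just above and below $1$. A clean way to finish this step: if $1$ is regular, fix $\epsilon>0$ with $B(1,2\epsilon)\subset V$ and take $z_0=1-\epsilon$. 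Any point $p$ outside $\{|z|<1\}\cup B(1,2\epsilon)$ satisfies $|p|\ge 1$ and $|p-1|\ge 2\epsilon$, and a short computation (e.g.\ parametrizing the two boundary curves) gives $|p-z_0|\ge \epsilon\sqrt{5-4\epsilon}>\epsilon$, so the Taylor series about $z_0$ has radius of convergence strictly greater than $\epsilon=\delta$, as you need. The remainder of your argument --- nonnegativity of the derived Taylor coefficients $\varphi^{(m)}(z_0)/m!=\sum_{n\ge m}a_n\binom{n}{m}z_0^{\,n-m}$, evaluation at $1+\eta'$, and the Tonelli rearrangement into $\sum_n a_n(1+\eta')^n<\infty$ --- is exactly right and yields the desired contradiction.
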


\begin{lemma}\thlabel{lem:M_regimes}
Let $\rho > 0$. Then
$M\leq d$ if and only if $g(d) = \infty$. 
\end{lemma}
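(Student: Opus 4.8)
The plan is to prove the two implications separately, using the analytic structure developed in the preliminaries together with Pringsheim's Theorem. Throughout, recall that $g(z) = f(z) = K[1,a_0z,a_1z,\ldots]$ for $|z|<M$, that $g$ has nonnegative coefficients $C_k^{\vlam,\vrho}$, and — crucially — that $f$ is meromorphic on all of $\mathbb C$ by \thref{lem:meromorphic}, since \eqref{eq:hyp2} holds whenever it is invoked (and it is a standing hypothesis of \thref{thm:main}, under which this lemma will be applied).

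\medskip
\noindent\textbf{The direction $g(d) = \infty \implies M \leq d$.} This is the easy half. By Pringsheim's Theorem (\thref{thm:pringsheim}), applied to $\varphi = g$ with its nonnegative coefficients, the point $z = M$ is a singularity of $g$; in particular $g$ is analytic, hence finite, on the real interval $[0,M)$. If we had $M > d$, then $d \in [0,M)$ and $g(d)$ would be finite, contradicting the assumption $g(d) = \infty$. Hence $M \le d$. (Note $\rho>0$ is not even needed here.)

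\medskip
\noindent\textbf{The direction $M \le d \implies g(d) = \infty$.} Suppose $M \le d$. By \thref{lem:meromorphic}, $f$ is meromorphic on $\mathbb C$ and agrees with $g$ on the disk $|z| < M$. By Pringsheim, $z = M$ is a singularity of $g$, hence of $f$; since $f$ is meromorphic, this singularity must be a \emph{pole} of $f$ at $z = M$. Now I distinguish $M < d$ from $M = d$. If $M < d$, then $g(d)$ cannot even be defined by the power series (we are outside the radius of convergence), and the standard meaning of ``$g(d) = \infty$'' in this paper is exactly that the series diverges, so there is nothing more to show; more to the point, the power series has a pole strictly inside the disk of radius $d$, so $g(d) := \sum C_k d^k$ diverges since $C_k^{1/k} \to 1/M > 1/d$ forces $C_k d^k \not\to 0$. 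If $M = d$, then $f$ has a pole at $z = d$ itself, and since $g = f$ on $|z| < d$ with $g$ having monotone-in-$r$ partial sums $\sum_{k\le N} C_k r^k$, letting $r \uparrow d$ along the reals gives $g(d) = \lim_{r \uparrow d} g(r) = \lim_{r\uparrow d} f(r) = +\infty$ because a pole of a meromorphic function forces $|f(r)| \to \infty$, and positivity of the coefficients makes the limit $+\infty$ rather than merely unbounded in modulus. This is where the role of $\rho > 0$ enters: it guarantees $M$ is finite and the continued-fraction/meromorphic machinery is non-degenerate (if $\rho = 0$ one could have $M = \infty$ or other degeneracies), so that the pole at $M$ genuinely sits at a finite point comparable to $d$.

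\medskip
\noindent\textbf{Main obstacle.} The delicate point is the boundary case $M = d$: one must rule out that the singularity of $g$ at $z = M$ guaranteed by Pringsheim is something other than a pole — e.g.\ a branch point or essential singularity — which is precisely why \thref{lem:meromorphic} (hence hypothesis \eqref{eq:hyp2}) is invoked to force meromorphy, reducing the only possible singularity type to a pole, and then using monotonicity of real partial sums to upgrade ``$|f(r)| \to \infty$'' to the genuine divergence $g(d) = +\infty$. The remaining care is bookkeeping: confirming that ``$g(d) = \infty$'' is interpreted consistently (divergence of the defining series) in both the $M<d$ and $M=d$ subcases, and checking that $\rho>0$ is what ensures $M<\infty$ so the comparison with $d$ is meaningful.
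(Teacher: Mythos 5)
Your proof is correct and takes essentially the same route as the paper: the easy implications follow from the definition of radius of convergence, and the boundary case $M=d$ is handled exactly as in the paper by combining \thref{lem:meromorphic} with Pringsheim's theorem to force a pole of the meromorphic extension $f$ at $z=d$, then using monotone convergence of the nonnegative partial sums to conclude $g(d)=\lim_{x\uparrow d}f(x)=\infty$. Two minor remarks: the claim ``$C_k^{1/k}\to 1/M$'' should be a $\limsup$ (the conclusion $C_k d^k\not\to 0$ still holds along a subsequence), and the role you propose for $\rho>0$ (ensuring $M<\infty$) is not actually what drives the argument here — the paper does not use it, and $M$ can already be finite when $\rho=0$, so the hypothesis appears vestigial.
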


\begin{proof}
We first note that the implication ``$M < d$ implies $g(d) = \infty$" as well as the reverse direction ``$g(d) = \infty$ implies $M \leq d$" both follow immediately from the definition of the radius of convergence. 
It remains to show that $M=d$ implies $g(d) = \infty$.
\thref{lem:meromorphic} proves that $f$ is a meromorphic function. Since $g=f$ for $|z| <M$, $f(x) > 0$ for $x \in (0,d)$ we have 
$$g(d) = \lim_{x\uparrow d} g(x) = \lim_{x\uparrow d} f(x).$$
Moreover, \thref{thm:pringsheim} gives $z=d$ is a singularity. Monotone convergence and nonnnegativity of the coefficients then ensure that $g(d^-) = f(d^-)= \infty$. 
\end{proof}

\subsection{Proof of \thref{thm:main}}

\begin{proof}
Index the $d^k$ vertices at distance $k$ from the root of $\mathcal T_d$ by $(v_{k,i})_{i=1}^{d^k}$. Self-similarity of the tree ensures that
\begin{align}
\E[|\B|] &= 1 + \E \left[ \sum_{k=0}^\infty \sum_{i=1}^{d^k} \ind{\text{$v_{k,i}$  is eventually blue} }\right] \\
&=1+ \sum_{k=0}^\infty \P(Y=k) d^k.\label{eq:Bb}
\end{align}

Now, suppose that $M>d$. Using \eqref{eq:Bb} and the comparison in \thref{lem:bigger} gives
\begin{align}
E [|\B|] &\leq 1+ \sum_{k=1}^ \infty ck^{1+m}\P(\RR_k) d^k\label{eq:EB}.
\end{align}
Since $M>d$, the sum on the right converges even with the polynomial prefactor. Thus, $\E[|\B|] < \infty$.

Next, suppose that $\E[|\B|] < \infty$. Let $Y$ be as at \eqref{eq:Ydef}. 
Since $\{Y=k\}$ contains the event that a renewal occurs at $k$ followed by blue advancing one step, we have $$\P(\R_{k-1}) \f{1}{1+ \lambda_1 + \rho_1}  \leq \P(Y=k).$$ Applying this bound to \eqref{eq:Bb} and reindexing the sum gives
$$\E[|\B|] \geq \f{d}{1+ \lambda_1 + \rho_1}  \sum_{k=0}^\infty \P(\RR_k) d^k = \f{1}{1+ \lambda_1 + \rho_1} g(d).$$
Hence, $g(d)<\infty$, which gives $M >d$ by \thref{lem:M_regimes}. 
\end{proof}




\bibliographystyle{amsalpha}
\bibliography{ced}

\end{document}